\title{The dual of Brown representability for some derived categories}
\author{George Ciprian Modoi}
\address{Babe\c s--Bolyai University, Faculty of Mathematics and Computer Science \\
1, Mihail Kog\u alniceanu, 400084 Cluj--Napoca, Romania}
\email{cmodoi@math.ubbcluj.ro}
\thanks{Research supported by CNCS-UEFISCDI grant PN-II-RU-TE-2011-3-0065}
\subjclass[2010]{18E30, 16D90, 14F05, 55U35}
\keywords{Brown representability, derived category, AB$4^*$-$n$ condition}
\date{\today}
\newcommand{\la}{\longrightarrow}
\newcommand{\N}{\mathbb{N}}
\newcommand{\Z}{\mathbb{Z}}
\newcommand{\Prime}{\mathbb{P}}
\DeclareMathOperator{\Ext}{Ext}
\DeclareMathOperator{\ima}{im}
\DeclareMathOperator{\holim}{\underleftarrow{\textrm{\rm holim}}}
\DeclareMathOperator{\Zy}{Z}
\DeclareMathOperator{\Ho}{H}
\DeclareMathOperator{\Bd}{B}
\newcommand{\A}{\mathcal{A}}
\newcommand{\B}{\mathcal{B}}
\newcommand{\CS}{\mathcal{S}}
\newcommand{\T}{\mathcal{T}}
\newcommand{\ModR}{\hbox{\rm Mod-}R}
\newcommand{\Ab}{\mathcal{A}b}
\newcommand{\opp}{^\textit{o}}
\newcommand{\Prod}[1]{\mathrm{Prod}({#1})}
\newcommand{\Prd}{\mathrm{Prod}}
\newcommand{\Der}[1]{\mathbf{D}({#1})}
\newcommand{\Htp}[1]{\mathbf{K}({#1})}
\newcommand{\Htpi}[1]{\mathbf{K}_i({#1})}
\newcommand{\Htpp}[1]{\mathbf{K}_p({#1})}
\newcommand{\Cpx}[1]{\mathbf{C}({#1})}
\newcommand{\Cot}{\mathrm{Cot}}
\theoremstyle{plain}
\newtheorem{thm}{Theorem}[section]
\newtheorem{lem}[thm]{Lemma}
\newtheorem{prop}[thm]{Proposition}
\newtheorem{cor}[thm]{Corollary}
\theoremstyle{definition}
\theoremstyle{remark}
\newtheorem{rem}[thm]{Remark}
\newenvironment{pf}{\noindent {\it Proof of}}
\begin{document}

\begin{abstract}
Consider a complete abelian category which has an injective cogenerator. 
If its derived category is left--complete we show that the dual of this derived category satisfies 
Brown representability. In particular, this is true for the derived category of an abelian AB$4^*$-$n$ category and
for the derived category of quasi--coherent sheaves over a nice enough scheme, including
the projective finitely dimensional space. 
\end{abstract}

\maketitle

\section*{Introduction}

The relevance of derived categories in algebraic geometry has been understood since the time of Grothendieck and his school.  
When one works with derived 
categories, an important problem is to construct adjoints. The main formal tool used for doing this is 
the celebrated Brown Representablity Theorem. This theorem is formulated in a general abstract 
setting, namely for a triangulated category with coproducts, by Neeman in \cite{N}. A 
main problem which remained open in Neeman's book is whether the dual of a 
well--generated triangulated category satisfies Brown representability. In order to fix our notions, let us say 
that Brown representability holds for a triangulated category with coproducts if every cohomological functor which sends 
coproducts to products is representable (by a contravariant hom--functor). For the dual statement, the triangulated category which we work in 
should have products and every homological product preserving functor has to be representable (by a covariant hom--functor). 
In this paper, we show that Brown representability is satisfied by the dual of some well--generated triangulated categories. 
Note that the derived category of quasi--coherent sheaves over a nice enough scheme (including the projective space of finite dimension 
over a commutative ring with one)
fulfills our hypotheses, hence its dual must satisfy Brown representability. 

This paper continues the work from \cite{MS}, \cite{MD}, \cite{MP}, \cite{MK} and \cite{BM}. First,
in \cite{MP} we generalize the idea of \cite{NR} by showing that Brown representability hold for triangulated categories 
with coproducts which are deconstructible in the sense of Lemma \ref{cof} below.  
Next in \cite{MS} it was observed that if the homotopy category of complexes $\Htp\A$ 
over an additive category $\A$ satisfies Brown representability,
then every object in $\A$ must be a direct factor of an arbitrary 
direct product of a fixed object.
To prove the converse, in the paper \cite{MD} it is dualized the approach in \cite{MP}. 
One of the main results in \cite{MD} says that a 
triangulated 
category $\T$ with products satisfies the dual of Brown representability, 
provided that there is a set of objects 
$\CS$, such that every 
object in $\T$ is $\CS$-cofiltered, that is it can be written as a homotopy 
limit of an inverse tower 
with the property that the mapping cone of all connecting 
morphisms are direct factors of direct products of objects in $\CS$. This result is applied in \cite{MK} to the 
homotopy category of projective modules over a ring.
Here we show that, under suitable hypotheses on an abelian category $\A$, every 
complex has a homotopically injective resolution, 
therefore the derived category of $\A$ is equivalent to the category of these 
homotopically injective complexes.
Moreover, these homotopically injective resolutions are constructed in such a way that the mapping 
cone of all connecting morphisms have vanishing differentials, 
allowing us to deduce 
Brown representability for the dual category $\Der\A\opp$. 

Remark that examples provided in this article do not follow from the previous known results in the literature. 
Indeed, neither \cite[Theorem B]{KB} nor \cite[Theorem 8.6.1]{N} do not directly apply, because the categories 
$\Der\A$ which occur in Corollaries \ref{brt4ab4n}, \ref{brt4fid} and 
\ref{brt4quasi} are not necessary compactly generated, and it is not clear whether there is a regular cardinal 
$\alpha$ such that the category of $\alpha$-exact contravariant functors $(\T^\alpha)\opp\to\Ab$ has enough injectives. 
One of the referees suggested that it would be an interesting problem to clarify this last point. Another interesting open 
problem is to clarify the relationship between Brown representability for $\T$ and $\T\opp$.  Although they seem to be independent, 
to the best of our knowledge, there is no example for fixing that fact. On the other hand, under some appropriate set theoretic axioms, 
in \cite{B} it is shown that, for any ring $R$, Brown representability for $\Htp\ModR$ and $\Htp\ModR\opp$ are equivalent.

\section{The main results}

We start by recalling some classical facts and notations concerning derived categories, which are 
necessary in order to formulate our main result. 
Let $\A$ be an abelian category. 
Then its derived category $\Der\A$ is constructed in three steps as follows: 
First, we consider the 
category $\Cpx\A$ of complexes with entries in $\A$, whose objects are 
diagrams of the form
\[X^\bullet=(\cdots\to X^{n-1}\stackrel{d^{n-1}}\la X^n\stackrel{d^n}\la X^{n+1}\to\cdots)^t\]
in $\A$ where $n\in\Z$ and $d^nd^{n-1}=0$. The superscript $()^t$ means the transpose, that is we view a complex as a column, 
for reasons which will be explained later. As usual, we call  
the maps $d_X^n=d^n$ {\em differentials}; we remove the subscript whenever it is 
clear from the context.  
Morphisms in $\Cpx\A$ are collections of maps 
$f^\bullet=(f^n)_{n\in\Z}$ in $\A$ commuting with differentials. 
For a complex $X^\bullet\in\Cpx\A$ and an $n\in\Z$, denote 
$\Zy^n(X^\bullet)=\ker d^n$ and $\Bd^n(X^\bullet)=\ima d^{n-1}$, and call them the object of {\em$n$-th 
cocycles} and the object of $n$-th {\em boundaries}  respectively. 
It is clear that $\Bd^n(X^\bullet)\leq\Zy^n(X^\bullet)\leq X^n$, thus we are allowed to consider 
$\Ho^n(X^\bullet)=\Zy^n(X^\bullet)/\Bd^n(X^\bullet)$, the {\em$n$-th 
cohomology} of $X^\bullet$. A complex $X^\bullet$ is called {\em acyclic} if $\Ho^n(X^\bullet)=0$ 
for all $n\in\Z$.

In the second  
step we construct the homotopy category of complexes over $\A$ as a quotient of $\Cpx\A$. Namely, 
$\Htp\A$ has the same objects as $\Cpx\A$ and 
$\Htp\A(X^\bullet,Y^\bullet)=\Cpx\A(X^\bullet,Y^\bullet)/\sim$, 
where $\sim$ is an equivalence relation called 
{\em homotopy}, defined as follows: Two maps of complexes 
$f^\bullet,g^\bullet:X^\bullet\to Y^\bullet$ are homotopically
equivalent if there are $s^n:X^n\to Y^{n-1}$, for all
$n\in\Z$ such that $f^n-g^n=d_Y^{n-1}s^n+s^{n+1}d_X^n$. This category is 
triangulated, see \cite[Theorem 10.2.4]{W}. The suspension functor, denoted by $[1]$, is an automorphism of $\Cpx\A$ or $\Htp\A$
and it is defined as follows: $X^\bullet[1]^n=X^{n+1}$, $d_{X[1]}^n=-d_X^{n+1}$ and $f^\bullet[1]=(f^{n+1})_{n\in\Z}$. 

Note that two complexes $X^\bullet$ and $Y^\bullet$ are isomorphic 
in $\Htp\A$ if there are maps of complexes $f^\bullet:X^\bullet\to Y^\bullet$ and 
$g^\bullet:Y^\bullet\to X^\bullet$ such that 
both compositions $g^\bullet f^\bullet$ and $f^\bullet g^\bullet$ 
are homotopically equivalent to the respective identities. 
If this is the case, it is 
not hard to see that $X^\bullet$ and $Y^\bullet$ have the same cohomology, so the functors 
$\Ho^n:\Cpx\A\to\A$, $n\in\Z$, induce well defined functors $\Htp\A\to\A$. 
Therefore the full subcategory of acyclic complexes is a 
triangulated subcategory of $\Htp\A$. 
The derived category 
$\Der\A$ is obtained as the Verdier quotient (see \cite[Section 2.1]{N}) 
of $\Htp\A$ modulo the triangulated subcategory of all acyclic complexes. A map  
$f^\bullet:X^\bullet\to Y^\bullet$ in $\Htp\A$ which 
induces isomorphisms in cohomology is called {\em quasi--isomorphism}. Then $\Der\A$ is the
category of fractions of $\Htp\A$ with respect to all quasi--isomorphisms. A priori 
there is no reason to expect that $\Der\A$ has small hom--sets. Note that all categories we consider 
in this paper have small hom--sets, with the unique possible exception of a (Verdier) quotient. 
The statement ``$\Der\A$ has small hom--sets'' says precisely that $\Der\A$ lives in the 
universe we work in.

We shall see every object of $\A$ as a complex concentrated in degree zero, 
providing embeddings of $\A$ in any of the categories $\Cpx\A$, $\Htp\A$
or $\Der\A$. 
Note also that, if $\A$ has (co)products 
then $\Cpx\A$ and $\Htp\A$ have (co)products and the canonical functor $\Cpx\A\to\Htp\A$ preserves them.
If, in addition, these (co)products are exact
then the full subcategory of acyclic complexes is closed under (co)products, therefore $\Der\A$ has also
(co)products and the quotient functor $\Htp\A\to\Der\A$ preserves them, by \cite[Theorem 3.5.1]{KL}. Note that, 
the exactness of (co)products in $\A$ is only a sufficient condition, and not a necessary one, for the existence of (co)products in $\Der\A$.

Let $\T$ be a triangulated category with products, and denote by $[1]$ its suspension functor. 
Recall that if \[X_0\leftarrow X_1\leftarrow X_2\leftarrow \cdots \] is an inverse tower 
(indexed over $\N$) of objects in $\T$, then its 
homotopy limit is defined (up to a non--canonical isomorphism) by the triangle 
\[ \holim X_n\la\prod_{n\in\N}X_n\stackrel{1-shift}\la\prod_{n\in\N}X_n\to\holim X_n[1], \]
see \cite[dual of Definition 1.6.4]{N}.

Now consider  $\T=\Der\A$, where $\A$ is an abelian category. For a complex 
$X^\bullet$ and a positive integer $n\in\N$, consider the truncation \[X^{\geq -n}=(0\to\Bd^{-n}(X^\bullet)\to X^{-n}\to X^{-n+1}\to\cdots)^t.\] 
There is a map of complexes $X^{\geq-(n+1)}\to X^{\geq-n}$ which is the identity $X^i\to X^i$ in  degrees $i\geq-n$, 
the zero map in degrees $i<-(n+1)$ and the canonical epimorphism 
$X^{-(n+1)}\to\Bd^{-n}(X^\bullet)$ in degree $-(n+1)$. In this way, we obtain an inverse tower 
\[X^{\geq0}\leftarrow X^{\geq-1}\leftarrow X^{\geq-2}\leftarrow\cdots.\]  
Then $\Der\A$ it is called {\em left--complete} (see \cite{NN}), provided that it has products and 
$X^\bullet\cong\holim X^{\geq -n}$.   An example of a 
non--left--complete derived category may be found in \cite{NN}. In counterpart,
some examples of left--complete will be provided later.

Let $\T$ be a triangulated category and let $\A$ be an abelian category. 
We call a covariant functor 
$F:\T\to\A$ {\em homological} if it sends triangles into long 
exact sequences. Dually a contravariant functor $F:\T\to\A$ which sends triangles into long exact sequences is called 
{\em cohomological}. Denote by $\Ab$ the category of abelian groups. 
Following \cite{NR}, we say that 
$\T$ satisfies Brown representability, if it has coproducts and  every cohomological functor $F:\T\to\Ab$ which sends 
coproducts into products is representable, that is of the form $F\cong\T(-,X)$ for some $X\in\T$. Dually $\T\opp$ satisfies 
Brown representability if $\T$ has products and every homological 
product preserving functor $F:\T\to\Ab$ is of the form $F\cong\T(X,-)$ for some $X\in\T$.   
Recall that an injective cogenerator for $\A$ is an object $Q\in\A$ such that there is a monomorphism from every other object to a
direct product of copies of $Q$, see  \cite[Chapter IV, \S 6]{S}.

In the sequel we shall formulate our main results:

\begin{thm}\label{brt4der}
 Let $\A$ be a complete abelian category possessing an injective 
cogenerator, and let $\Der\A$ be its derived category. If $\Der\A$ is left--complete, then
$\Der\A$ has small hom--sets and $\Der\A\opp$ satisfies Brown representability.
\end{thm}

Before we prove Theorem \ref{brt4der} we state some immediate consequences. 
Recall that a complete abelian category $\A$ is said to be {\em AB$4^*$-$n$}, with $n\in\N$, 
if the $i$-th derived functor 
of the direct product functor is zero, for all $i>n$ (see also \cite{R} or \cite{HX}). Clearly AB$4^*$-$0$ categories 
are the same as AB$4^*$ categories, that is abelian categories with exact products. 

\begin{cor}\label{brt4ab4n}
  Let $\A$ be an abelian complete category possessing an injective 
cogenerator. If $\A$ is AB$4^*$-$n$, for some $n\in\N$ and $\Der\A$ has products, then
$\Der\A$ has small hom--sets and $\Der\A\opp$ satisfies Brown representability.
\end{cor}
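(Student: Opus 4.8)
The plan is to derive Corollary \ref{brt4ab4n} as a direct application of Theorem \ref{brt4der}, so the entire task reduces to verifying that an AB$4^*$-$n$ category $\A$ (complete, with an injective cogenerator) has a left--complete derived category. Theorem \ref{brt4der}(4) already delivers Brown representability for $\Der\A\opp$ once $\Der\A$ is known to be left--complete, and the hypotheses on completeness and the injective cogenerator are assumed verbatim in the corollary. Thus the only genuine content to supply is the implication
\[
\A\ \text{is AB}4^*\text{-}n\ \Longrightarrow\ \Der\A\ \text{is left--complete}.
\]

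First I would recall the defining isomorphism for left--completeness: for each complex $X^\bullet$ one must show $X^\bullet\cong\holim X^{\geq-n}$, where the truncations $X^{\geq-n}$ are the ones fixed in the text. The natural maps $X^\bullet\to X^{\geq-n}$ are compatible with the tower, so they induce a canonical morphism $X^\bullet\to\holim X^{\geq-n}$; the task is to prove this is an isomorphism in $\Der\A$. Since $\holim$ is defined by the triangle
\[
\holim X^{\geq-n}\la\prod_{n}X^{\geq-n}\stackrel{1-\mathrm{shift}}\la\prod_{n}X^{\geq-n}\to\holim X^{\geq-n}[1],
\]
the obstruction to $X^\bullet\cong\holim X^{\geq-n}$ is measured by the failure of products to be exact, and this is precisely where the AB$4^*$-$n$ hypothesis enters. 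The key step is to compute the cohomology $\Ho^m(\holim X^{\geq-n})$ by means of the $\lim^1$-type (more generally, higher $\varprojlim^i$) long exact sequence arising from the homotopy--limit triangle, and to show that the derived functors of the product vanishing in degree $n$ forces all the correction terms to collapse, leaving $\Ho^m(\holim X^{\geq-n})\cong\Ho^m(X^\bullet)$ for every $m$. Concretely, for fixed $m$ the truncation tower stabilizes: $\Ho^m(X^{\geq-n})=\Ho^m(X^\bullet)$ once $-n<m$, so the tower of cohomologies is eventually constant, its higher limits $\varprojlim^i$ for $i\ge1$ being governed by the derived product functors, which vanish beyond degree $n$ by the AB$4^*$-$n$ assumption.

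The main obstacle, and the step I would spend the most care on, is controlling these higher derived limits: unlike the AB$4^*$ (i.e.\ $n=0$) case where products are exact and $\holim$ computes ordinary cohomology cleanly, in the AB$4^*$-$n$ setting one must track a finite but nonzero range of derived functors of $\prod$ and argue that they contribute nothing to the cohomology of the homotopy limit. I expect this to hinge on a spectral sequence or iterated long exact sequence whose $E_2$ (or higher) page involves $\varprojlim^i$ of the cohomology tower, combined with the vanishing $\mathbf{R}^n\!\prod=0$; since the tower is Mittag--Leffler (indeed eventually constant) in each fixed cohomological degree, the relevant $\varprojlim^i$ vanish for $i\ge1$, and the finiteness coming from AB$4^*$-$n$ guarantees convergence. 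Once $X^\bullet\cong\holim X^{\geq-n}$ is established, the remaining conclusion is immediate: $\Der\A$ has products by hypothesis and is left--complete, so Theorem \ref{brt4der}(4) applies and yields Brown representability for $\Der\A\opp$.
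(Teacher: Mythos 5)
Your overall reduction coincides with the paper's: the corollary follows from Theorem \ref{brt4der}(4) once one knows that an AB$4^*$-$n$ category (complete, with injective cogenerator, and with $\Der\A$ having products) has a left--complete derived category. The paper disposes of that implication in one line, by citing \cite[Theorem 1.3]{HX}; you instead set out to prove it, and this is where your proposal has a genuine gap. The implication AB$4^*$-$n$ $\Rightarrow$ left--completeness is not a routine verification --- it is the main theorem of the Hogadi--Xu paper --- and your argument for it is never actually carried out: the decisive step is introduced with ``I expect this to hinge on a spectral sequence or iterated long exact sequence\dots''. Concretely, what is missing is (i) the construction of a convergent spectral sequence (or iterated exact-sequence argument) computing $\Ho^m$ of the derived product $\prod_n X^{\geq -n}$ in terms of the derived product functors $\prod^{(p)}$ applied to the families of cohomologies --- AB$4^*$-$n$ is what makes such a device converge, but the device itself must be built, not invoked; and (ii) an analysis of how $1-\mathrm{shift}$ interacts with the resulting filtration, so as to conclude that the ``derived inverse limit'' contributions of the eventually constant towers $\{\Ho^q(X^{\geq -n})\}_n$ vanish in positive degrees. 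Neither step is standard in this generality; together they are precisely the content of \cite[Theorem 1.3]{HX}.

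A further warning about the one mechanism you do commit to: you justify the vanishing of the higher $\varprojlim^i$ by saying the cohomology towers are ``Mittag--Leffler (indeed eventually constant)''. In a general abelian --- even Grothendieck --- category the Mittag--Leffler condition does \emph{not} imply vanishing of $\varprojlim^1$: this is Roos's counterexample \cite[Corollary 1.6]{R}, and it is exactly the trap the paper flags in Remark \ref{ml}, where it is recalled that Keller's ML-based argument breaks down outside module categories. So your parenthetical ``indeed eventually constant'' is carrying all the weight; eventual constancy does suffice, but that too requires an argument in this generality (e.g.\ via cofinality and the structure of injective towers), and any appeal to ML as such would be fatal. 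The clean repair of your proof is the paper's own: quote \cite[Theorem 1.3]{HX} for left--completeness and then apply Theorem \ref{brt4der}.
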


\begin{proof}
 We know by \cite[Theorem 1.3]{HX}, that $\Der\A$ is left--complete, 
 hence Theorem \ref{brt4der} applies.
\end{proof}

Let $\A$ be an abelian category with enough injectives. An {\em injective resolution} of 
$X\in\A$ is a complex of injectives $E^\bullet$ which is zero in 
negative degrees, together with an augmentation map $X\to E^\bullet$, such that the complex 
$0\to X\to E^0\to E^1\to\cdots$ is acyclic.
The {\em injective dimension} of an object
$X\in\A$ is defined to be the smallest $n\in\N$ for which $X$ has an injective resolution of the form
\[0\to X\to E^0\to E^1\to\cdots\to E^{n-1}\to E^n\to 0,\] or $\infty$ if such an injective resolution 
does not exist. Equivalently, $X$ 
has injective dimension $n$ if it is the smallest non--negative integer for which  $\Ext^{n+1}(-,X)=0$. The {\em global injective dimension} of $\A$ is defined to be 
the supremum of all injective dimensions of its objects.

\begin{rem}
 Products in module categories are exact, that is $\ModR$ is AB$4^*$ for every ring $R$ (with or
 without one), hence Corollary \ref{brt4ab4n} applies. But in this case the derived category is known to be compactly generated, 
 hence both $\Der\A$ and $\Der\A\opp$ satisfy Brown representability, for example 
 by \cite[Theorem A and Theorem B]{KB}. An example of a Grothendieck
 AB$4^*$ category which has no nonzero projectives, hence it is not equivalent to a module category, 
 may be found in \cite[Section 4]{R}. Note also that in 
 \cite[Theorem 1.1]{HX} there are other examples of abelian categories $\A$ which are AB$4^*$-$n$, 
 for some $n\in\N$, that is categories for which $\Der\A\opp$ satisfies Brown representability, 
 by Corollary \ref{brt4ab4n} above.   
\end{rem}

\begin{cor}\label{brt4fid}
  Let $\A$ be an abelian complete category possessing an injective 
cogenerator. If $\A$ is of finite global 
injective dimension and $\Der\A$ has products, then 
$\Der\A$ has small hom--sets and $\Der\A\opp$ satisfies Brown representability.
\end{cor}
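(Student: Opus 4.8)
The plan is to deduce this statement from Corollary \ref{brt4ab4n}: once we know that $\A$ is AB$4^*$-$n$ for some $n\in\N$, the conclusion is immediate, since $\Der\A$ is assumed to have products. Thus the whole task reduces to showing that finite global injective dimension forces the direct product functor to have finite cohomological dimension. Write $d=\injd\A<\infty$ for the global injective dimension of $\A$.

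First I would record two standard facts that hold under our hypotheses. A complete abelian category with an injective cogenerator $E$ has enough injectives: for any $X$ the canonical map into the product of copies of $E$ indexed by $\A(X,E)$ is a monomorphism into an injective object. Moreover, arbitrary products of injectives are injective, because $\A(-,\prod_iE_i)\cong\prod_i\A(-,E_i)$ and a product of exact contravariant functors valued in $\Ab$ is again exact (a product of surjections of abelian groups is surjective). Consequently the product category $\A^I$ has enough injectives, and its injective objects are exactly the families of injectives of $\A$.

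Next I would compute the right derived functors of the product functor $\prod\colon\A^I\to\A$ by resolving componentwise. Given a family $(X_i)_{i\in I}$, the bound $d=\injd\A$ provides for each $i$ an injective resolution $0\to X_i\to E_i^0\to\cdots\to E_i^d\to 0$ concentrated in degrees $0,\dots,d$. Assembling these gives an injective resolution of $(X_i)_{i\in I}$ in $\A^I$, and applying $\prod$ yields the complex $\prod_iE_i^\bullet$, whose cohomology computes $R^m\prod\,(X_i)=\Ho^m(\prod_iE_i^\bullet)$. Since this complex lives in degrees $0,\dots,d$, we obtain $R^m\prod=0$ for every $m>d$. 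Hence the product functor has cohomological dimension at most $d$, so $\A$ is AB$4^*$-$n$ for a suitable $n$ (for instance $n=d$), and Corollary \ref{brt4ab4n} finishes the argument.

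The one point deserving care is that one must not claim exactness of products: the vanishing above degree $d$ comes purely from the length bound on the componentwise resolutions, while in the top degree $d$ the cohomology of $\prod_iE_i^\bullet$ may well be nonzero, precisely because $\A$ need not be AB$4^*$. The remaining thing to check is that $\A^I$ has enough injectives and that its injective objects are the families of injectives of $\A$, so that the derived functors of $\prod$ are indeed computed by componentwise injective resolutions; this is exactly where completeness and the injective cogenerator (which together yield enough injectives in $\A$) enter. Everything else is the bounded-resolution bookkeeping above.
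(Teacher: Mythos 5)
Your proposal is correct and follows essentially the same route as the paper: reduce to Corollary \ref{brt4ab4n} by showing that the derived functors of the product functor are computed from componentwise injective resolutions of length at most $d=\injd\A$, hence vanish above degree $d$. The extra details you supply (enough injectives in $\A$ from completeness plus the injective cogenerator, injectivity of products of injectives, and the identification of injectives in $\A^I$) are exactly the justifications the paper leaves implicit, so there is no substantive difference.
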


\begin{proof} We want to apply Corollary \ref{brt4ab4n}, so we will to show that $\A$ is AB$4^*$-$n$, where $n$ is 
the global injective dimension of $\A$. Fix an index set $I$. The $k$-th derived functor of the product $\prod^{(k)}:\A^I\to\A$ can be computed as follows: 
Consider arbitrary objects $X_i\in\A$ with $i\in I$. For every  $i$ choose an injective resolution $X_i\to E_i^\bullet$ of length less than or equal to $n$. 
 Then $\prod^{(k)}X_i=\Ho^k(\prod E_i^\bullet)$, therefore $\prod^{(k)}X_i=0$ for $k>n$.
\end{proof}

\begin{cor}\label{brt4quasi}
 If $\A$ is the category of quasi--coherent sheaves 
 over a quasi--compact and separated scheme then $\Der\A$ has small hom--sets and $\Der\A\opp$ satisfies 
 Brown representability. In particular, if $\Prime^d_R$ is the projective $d$-space, $d\in\N^*$, 
 over an arbitrary commutative ring with one $R$ and $\A$ is the category of quasi--coherent sheaves
 over $\Prime^d_R$, then $\Der\A$ has small hom--sets and $\Der\A\opp$ satisfies 
 Brown representability.
\end{cor}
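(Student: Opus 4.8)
The plan is to derive this from Corollary~\ref{brt4ab4n} by checking its hypotheses for the category $\A=\mathrm{QCoh}(X)$ of quasi--coherent sheaves on a quasi--compact separated scheme $X$. I would begin by invoking the standard fact that $\mathrm{QCoh}(X)$ is a Grothendieck category: in particular it is complete, it possesses an injective cogenerator, and (since homotopically injective resolutions exist in any Grothendieck category, giving $\Htpi\A\simeq\Der\A$ as recalled above) its derived category has small hom--sets and all products, the latter being computed through homotopically injective resolutions. Thus the completeness, injective--cogenerator, and product hypotheses of Corollary~\ref{brt4ab4n} hold automatically, and the only thing left to produce is an $n\in\N$ for which $\mathrm{QCoh}(X)$ is AB$4^*$-$n$.

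The core of the argument is to compute the right derived functors $\prod^{(i)}$ of the product by means of a \v{C}ech complex attached to a finite affine cover. Since $X$ is quasi--compact it admits a finite affine open cover $U_0,\dots,U_m$, and since $X$ is separated every intersection $U_{i_0\cdots i_k}=U_{i_0}\cap\cdots\cap U_{i_k}$ is again affine; hence each inclusion $U_{i_0\cdots i_k}\hookrightarrow X$ is an affine morphism, so its direct image is exact on quasi--coherent sheaves and commutes with products. For a family $(\F_\lambda)_\lambda$ in $\mathrm{QCoh}(X)$ the restriction to each affine piece lands in a module category, where products are exact, and I would use this to show that the \v{C}ech complex of products resolves the product and therefore computes $\prod^{(i)}$. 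Because that complex is concentrated in degrees $0,1,\dots,m$, its cohomology vanishes above degree $m$, so $\prod^{(i)}=0$ for $i>m$ and $\mathrm{QCoh}(X)$ is AB$4^*$-$m$. Corollary~\ref{brt4ab4n} then gives Brown representability for $\Der\A\opp$. For the projective space $\Prime^d_R$ the standard cover by the $d+1$ affine opens $D_+(x_0),\dots,D_+(x_d)$ is precisely of this shape, with $m=d$, so $\mathrm{QCoh}(\Prime^d_R)$ is AB$4^*$-$d$ and the asserted special case drops out.

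The delicate step, which I expect to be the main obstacle, is justifying that the \v{C}ech complex computes the derived product \emph{in} $\mathrm{QCoh}(X)$ and not merely in the ambient category of all $\mathcal{O}_X$--modules. Products in $\mathrm{QCoh}(X)$ do not in general agree with products of the underlying sheaves---they are obtained by applying the coherator---so one must verify that the descent/gluing description of quasi--coherent sheaves along the affine cover is compatible with the formation of products, and that the resulting \v{C}ech complex is exact in positive degrees when evaluated on a product--acyclic family, thereby identifying its cohomology with $\prod^{(i)}$. Once this compatibility is in place the degree bound, and with it the AB$4^*$-$m$ condition, follows immediately from the length of the cover.
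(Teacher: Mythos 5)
Your proposal is correct, and at the skeleton level it is the paper's proof: both reduce the corollary to Corollary~\ref{brt4ab4n} by checking its hypotheses for $\A=\mathrm{QCoh}(X)$. The divergence is in how the two hypotheses are verified. For products in $\Der\A$, the paper argues indirectly: $\A$ is Grothendieck, so $\Der\A$ satisfies the \emph{direct} Brown representability theorem \cite[Theorem 5.8]{ALS}, and any such category automatically has products by \cite[Proposition 8.4.6]{N}; you instead get products from the equivalence $\Htpi\A\stackrel{\sim}\la\Der\A$, which is equally valid. More substantially, for the AB$4^*$-$n$ condition the paper simply cites \cite[Remark 3.3]{HX}, whereas you reconstruct a proof via the \v{C}ech complex of a finite affine cover --- which is essentially the argument behind the cited result. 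Your reconstruction does go through, and the ``delicate step'' you flag is less delicate than you fear: one never needs to compare QCoh-products with sheaf products. Each term of the \v{C}ech resolution is a finite sum of pushforwards $(j_{i_0\cdots i_k})_*$ along inclusions of affine opens; since these inclusions are affine morphisms (here separatedness enters), each such $j_*$ is exact on quasi-coherent sheaves, preserves QCoh-products (it is a right adjoint of $j^*$), and preserves injectives (its left adjoint $j^*$ is exact). Hence the families formed by the \v{C}ech terms are acyclic for the product functor, so the standard fact that acyclic resolutions compute derived functors identifies $\prod^{(i)}$ with the cohomology of the product of \v{C}ech complexes, which vanishes above the length $m$ of the cover. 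What your route buys is a self-contained proof with an explicit bound (AB$4^*$-$d$ for $\Prime^d_R$, from the cover by $d+1$ affines); what the paper's route buys is brevity, outsourcing exactly this argument to Hogadi--Xu.
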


\begin{proof} The category of quasi--coherent sheaves is Grothendieck, hence  
$\Der\A$ satisfies Brown representability (see for example \cite[Theorem 5.8]{ALS}). 
Consequently, $\Der\A$ has products, by \cite[Proposition 8.4.6]{N}. 
 Moreover, according to \cite[Remark 3.3]{HX}, the category of quasi--coherent sheaves
 over a quasi--compact, separated scheme is AB$4^*$-$n$, for some $n\in\N$.  
 
 Finally,
 $\Prime^d_R$ is obtained by glueing together $d+1$ affine open sets (see \cite[4.4.9]{FAG}).
 Hence, it is quasi--compact (see also exercise \cite[5.1.D]{FAG}). Moreover, $\Prime^d_R$ is separated by 
 \cite[Proposition 10.1.5]{FAG}. 
\end{proof}
 
In the following Corollary we point out that homotopically injective resolutions exist in $\Htp\A$, provided that the
abelian category $\A$ satisfies the hypothesis of Theorem \ref{brt4der}. For technical reasons its proof is postponed 
after the proof of Theorem \ref{brt4der}. 
 
\begin{cor}\label{alt}
The following statements hold for a complete abelian category $\A$ possessing an injective 
cogenerator for which the derived category $\Der\A$ is left--complete:
\begin{itemize}
\item[{\rm (1)}] Every object in $\Htp\A$ has a homotopically injective resolution. 
\item[{\rm (2)}] There is an equivalence of categories $\Htpi\A\stackrel{\sim}\la\Der\A$.
\item[{\rm (3)}] Every additive functor $F:\A\to\B$ to another abelian category $\B$ has a total 
right derived functor ${\mathbf R}F:\Der\A\to\Der\B$
(for details see \cite[1.4]{KP}).
\end{itemize}
\end{cor}

\begin{rem}
Notice that the conclusions of Corollary \ref{alt} are already known for Grothendieck categories 
(see \cite{ALS}). 
Even if the category $\A$ is not necessary Grothendieck, but it satisfies the hypotheses of Theorem \ref{brt4der}, we can easily prove (1), 
but it is not clear if Brown representability for $\Der\A\opp$ can be deduced from this shortest argument.  
\end{rem}

\section{Proof of Theorem \ref{brt4der}}

The first ingredient in the proof of Theorem \ref{brt4der} is a refinement of the technique used by Neeman in \cite{NR} and it is contained in 
\cite{MD}. Here we recall it shortly.
Let $\T$ be a triangulated category with products, and
let $\CS\subseteq\T$ be a set of objects. We denote by $\Prod\CS$ the full
subcategory of $\T$ consisting of all direct factors of products of objects in $\CS$.
We define inductively $\Prd_1(\CS)=\Prod\CS$ and $\Prd_{n+1}(\CS)$ to be
the full subcategory of $\T$ which consists of all objects $Y$
lying in a triangle $X\to Y\to Z\to X[1]$ with $X\in\Prd_1(\CS)$
and $Z\in\Prd_n(\CS)$. Clearly the construction leads to an ascending chain
$\Prd_1(\CS)\subseteq\Prd_2(\CS)\subseteq\cdots$. If we suppose that $\CS=\CS[1]$, then $\Prd_n(\CS)=\Prd_n(\CS)[1]$, 
by \cite[Remark 1.7]{NR}.  The same
\cite[Remark 1.7]{NR} says, in addition, that if $X\to Y\to Z\to X[1]$ is a
triangle with $X\in\Prd_n(\CS)$ and $Z\in\Prd_m(\CS)$ then
$Y\in\Prd_{n+m}(\CS)$. An object $X\in\T$ will be called
{\em$\CS$-cofiltered} if it may be written as a homotopy limit 
$X\cong\holim X_n$ of an inverse tower, with $X_0=0$, and
$X_{n+1}$ lying in a triangle $P_{n}\to X_{n+1}\to X_n\to P_n[1],$
for some $P_n\in\Prod\CS$. Inductively, we have
$X_n\in\Prd_n(\CS)$, for all $n\in\N^*$. Notice that $X$ is $\CS$-cofiltered if and only if $X$ is in 
$\Prd_\omega(\CS)*\Prd_\omega(\CS)$ in the sense of \cite{NR}.
The dual notion is called {\em filtered}. The terminology comes 
from the analogy with the filtered objects in an abelian category (see \cite[Definition 3.1.1]{GT}). 
Using further the same analogy, we say that $\T$ (respectively $\T\opp$) is {\em deconstructible} if 
there is a set (and not a proper class) of objects $\CS=\CS[1]$,  
such that every object $X\in\T$ is $\CS$--filtered (cofiltered). Note that we may define 
deconstructibility without closure under shifts. Indeed, if every $X\in\T$ is 
$\CS$--(co)filtered, then it is also $\overline{\CS}$--(co)filtered, where  $\overline{\CS}$ is the closure of $\CS$ 
under all shifts.

\begin{lem}\label{cof}\cite[Theorem 8]{MD}
If $\T\opp$ is deconstructible, then $\T\opp$ satisfies Brown representability.
\end{lem}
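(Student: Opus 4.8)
The plan is to dualize Neeman's construction of a representing object (\cite{N}), trading coproducts of generators for products of objects of $\CS$ and homotopy colimits for homotopy limits. So fix a homological functor $F\colon\T\to\Ab$ that preserves products; we must show it is representable. By the covariant Yoneda lemma, a natural transformation $\T(X,-)\to F$ is nothing but an element $\xi\in F(X)$, the associated transformation being $\eta_Y\colon\T(X,Y)\to F(Y)$, $f\mapsto F(f)(\xi)$. Hence it suffices to produce an object $X$ together with $\xi\in F(X)$ for which every $\eta_Y$ is bijective.

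First I would construct the candidate as a homotopy limit $X=\holim X_n$ of a tower built by induction (all products below exist because $\T$ has products). Set $X_0=\prod_{(s,y)}s$, the product ranging over all pairs $(s,y)$ with $s\in\CS$ and $y\in F(s)$; since $F$ preserves products, $F(X_0)=\prod_{(s,y)}F(s)$ contains the element $\xi_0=(y)_{(s,y)}$, and composing with the projection onto the $(s,y)$-factor shows at once that $\eta^{X_0}_s\colon\T(X_0,s)\to F(s)$, $f\mapsto F(f)(\xi_0)$, is surjective for each $s\in\CS$. Given $X_n$ and $\xi_n\in F(X_n)$, put $P_n=\prod_{(s,g)}s$ over all pairs with $g\in\Ker(\eta^{X_n}_s)$, let $g_n\colon X_n\to P_n$ be the morphism with $(s,g)$-component $g$, and complete it to a triangle $X_{n+1}\to X_n\xrightarrow{g_n}P_n\to X_{n+1}[1]$. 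Each component of $F(g_n)(\xi_n)$ equals $F(g)(\xi_n)=0$, so $F(g_n)(\xi_n)=0$ and $\xi_n$ lifts along $F(X_{n+1})\to F(X_n)$ to some $\xi_{n+1}$, while the chosen kernel morphisms restrict to $0$ on $X_{n+1}$. As each $P_n$ lies in $\Prod\CS$, the object $X$ is $\CS$-cofiltered. Applying the homological, product-preserving functor $F$ to the triangle defining $\holim X_n$ produces a Milnor exact sequence whose surjection $F(X)\twoheadrightarrow\varprojlim F(X_n)$ lets me lift the compatible family $(\xi_n)$ to $\xi\in F(X)$, yielding $\eta\colon\T(X,-)\to F$.

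Next I would check that $\eta_s$ is bijective for $s\in\CS$. Surjectivity descends from $X_0$ along the structure morphism $p_0\colon X\to X_0$, using $F(p_n)(\xi)=\xi_n$ for the canonical maps $p_n\colon X\to X_n$. Injectivity is the crux: a morphism $X\to s$ need not factor through any finite stage, since $\T(\holim X_n,s)$ is governed by $\prod_n\T(X_n,s)$ and not by $\varinjlim\T(X_n,s)$. I would extract it from the Milnor sequence obtained by applying the cohomological functor $\T(-,s)$ to the homotopy-limit triangle, arguing that the systematic killing of kernels at every stage forces the relevant $\varprojlim$ and $\varprojlim^1$ terms to vanish; here the closure $\CS=\CS[1]$ is essential, since kernels must be annihilated in all suspension degrees simultaneously. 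Because both $\T(X,-)$ and $F$ preserve products and a natural isomorphism is inherited by retracts, $\eta_Y$ is then an isomorphism for every $Y\in\Prod\CS$.

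Finally I would bootstrap from $\Prod\CS$ to all of $\T$. Let $\clL=\{Y\in\T\mid\eta_Y\text{ is an isomorphism}\}$; it is stable under suspension. Since $\T(X,-)$ and $F$ are homological, the five lemma shows $\clL$ is closed under the triangles $P_n\to Y_{n+1}\to Y_n\to P_n[1]$ occurring in a cofiltration; since both preserve products, the two Milnor sequences together with the five lemma show $\clL$ is closed under homotopy limits of towers with stages in $\clL$. As $\Prod\CS\subseteq\clL$ and every object of $\T$ is $\CS$-cofiltered, we conclude $\clL=\T$, so $\eta$ is a natural isomorphism and $F$ is representable. The main obstacle is precisely the injectivity step of the third paragraph: controlling morphisms out of a homotopy limit, where $\T(\holim X_n,s)\neq\varinjlim\T(X_n,s)$ in general and a $\varprojlim^1$-obstruction appears, which is where the degreewise kernel-killing and the hypothesis $\CS=\CS[1]$ do the decisive work.
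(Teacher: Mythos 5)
The paper itself does not prove this lemma: it imports it verbatim from \cite[Theorem 8]{MD}, so your attempt must be measured against the argument given there. Your skeleton --- a kernel--killing tower $X_0\leftarrow X_1\leftarrow\cdots$ with candidate $X=\holim X_n$ and a lifted element $\xi\in F(X)$, followed by a bootstrap from $\Prod\CS$ to all of $\T$ using the five lemma, Milnor sequences and the hypothesis that every object is $\CS$-cofiltered --- is the natural dualization, and the bootstrap paragraph is essentially sound: $\T(X,-)$ and $F$ are both covariant, homological and product--preserving, so each of them really does yield a Milnor sequence when applied to the triangle defining a homotopy limit. The gap is exactly where you locate it, and your proposed repair cannot work. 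Applying the contravariant functor $\T(-,s)$ to the triangle $X\to\prod_n X_n\to\prod_n X_n\to X[1]$ produces a long exact sequence whose middle terms are $\T(\prod_n X_n,s)$, and a contravariant Hom does not convert a product in its first variable into a product (or coproduct) of the groups $\T(X_n,s)$; your assertion that ``$\T(\holim X_n,s)$ is governed by $\prod_n\T(X_n,s)$'' is false. Hence there is no Milnor sequence on that side, no $\varprojlim$ or $\varprojlim^{1}$ terms to kill, and the kernel--annihilation built into the tower only disposes of morphisms $X\to s$ that factor through some finite stage $X_n$ --- which, as you note yourself, need not happen, since objects of $\CS$ are not assumed cocompact. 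This asymmetry (covariant Hom interacts well with homotopy limits, contravariant Hom does not) is precisely why the dual of Brown representability cannot be obtained by running Neeman's argument backwards, and why the question remains open for well--generated categories, as the introduction of the paper recalls; the closure $\CS=\CS[1]$ does nothing to remove the obstruction.

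The actual proof in \cite{MD} is genuinely different at the point where yours breaks down: as the present paper explains, it dualizes the technique of perfectly generating projective classes from \cite{MP}, working with the injective class determined by $\Prod\CS$ and its ideal of phantom maps (morphisms $f$ with $\T(f,E)=0$ for all $E\in\Prod\CS$). There, the hypothesis that \emph{every} object of $\T$ is $\CS$-cofiltered --- not merely the tower one constructs --- is what controls maps out of products and the convergence of the phantom towers, and the isomorphism on the cogenerators is obtained through this machinery rather than through any $\varprojlim^{1}$ argument. In your proposal, deconstructibility is used only in the final bootstrap, and the cogenerator step rests on a nonexistent exact sequence; as it stands, the proof is incomplete at its decisive step.
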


The second ingredient of our proof is an adaptation 
of the argument in \cite[Appendix]{KP}. Fix a 
complete abelian category $\A$, which has an injective cogenerator. 

Recall that a complex $X^\bullet\in\Htp\A$ is called {\em homotopically injective} if 
$\Htp\A(N^\bullet,X^\bullet)=0$, 
for any acyclic complex $N^\bullet$. Denote by $\Htpi\A$ the full subcategory of $\Htp\A$ consisting of 
homotopically injective complexes. It follows immediately, that $\Htpi\A$ is a triangulated subcategory of $\A$ closed under products and direct summands. 
Dually, we can define the {\em homotopically projective} complexes and 
we write $\Htpp\A$ for the 
full subcategory of $\Htp\A$ consisting of such complexes. A {\em homotpically injective resolution}
of a complex $X^\bullet\in\Htp\A$ is by definition a quasi--isomorphism $X^\bullet\to E^\bullet$, with $E^\bullet$ 
homotopically injective. Homotopically injective and projective complexes and resolutions were 
first defined by Spaltenstein in \cite{Sp}, but we follow the approach in \cite{KP}. 
If every complex in $\Htp\A$ has a homotopically injective (projective) resolution, then this resolution yields a left (right) adjoint of the inclusion functor 
$\Htpi\A\to\Htp\A$ (respectively $\Htpp\A\to\Htp\A$); the argument in \cite[1.2]{KP} generalizes with no change in this more general case.
For example, if $R$ is a ring and $\A=\ModR$ is the category of 
all right modules over $R$, then $\A$ has enough projective and enough injective objects, and by \cite[1.1. and 1.2]{KP}  we have equivalences of categories 
\[\Htpp\ModR\stackrel{\sim}\la\Der\ModR\stackrel{\sim}\longleftarrow\Htpi\ModR.\]
More generally, if $\A$ is a Grothendieck category, 
it may not have enough projectives, and the left--side functor might not be an equivalence.  
But it must have enough injectives, and the right--side equivalence must hold as it can be seen 
from \cite[Section 5]{ALS}. Another proof of this fact is contained in \cite[Section 3]{F}.

We consider double complexes with entries in $\A$, 
whose differentials go from 
bottom to top and from left to right. That is, a double complex is a commutative diagram of the form:
\[X^{\bullet,\bullet}=\left(\diagram 
X^{i+1,j}\rto^{d^{i+1,j}_h}            & X^{i+1,j+1} \\
X^{i,j}\uto^{d^{i,j}_v}\rto_{d^{i,j}_h} & X^{i,j+1}\uto_{d^{i,j+1}_v}
\enddiagram\right)_{i,j\in\Z}\]
such that $d_v^2=0=d_h^2$. We denote by $X^{\bullet,j}$ the columns and by $X^{i,\bullet}$  
the rows of $X^{\bullet,\bullet}$.

Let $X^\bullet\in\Cpx\A$ be a complex. We identify it with a double complex concentrated in the 0-th column, making explicit the reason for which simple complexes are columns.
A {\em Cartan--Eilenberg injective resolution} for
$X^\bullet$ ({\em CE injective resolution} for short) is a right half--plane double complex $E^{\bullet,\bullet}$ 
(that is $E^{i,j}=0$ for $j<0$), together with an augumentation map  (of double complexes) 
$X^\bullet\to E^{\bullet,\bullet}$ (with the identification above) such that 
$E^{i,\bullet}=0$ provided that $X^i=0$ and 
 the induced sequences  
\[0\to\Ho^i(X^\bullet)\to\Ho^i(E^{\bullet,0})\to\Ho^i(E^{\bullet,1})\to\cdots,\] 
\[0\to\Bd^i(X^\bullet)\to\Bd^i(E^{\bullet,0})\to\Bd^i(E^{\bullet,1})\to\cdots\]
are injective resolutions for all $i\in\Z$ (see \cite[Definition 5.7.1]{W}). If $X^\bullet\to E^{\bullet,\bullet}$ is a CE injective resolution, then the induced sequences 
\[0\to\Zy^i(X^\bullet)\to\Zy^i(E^{\bullet,0})\to\Zy^i(E^{\bullet,1})\to\cdots,\] 
\[0\to X^i\to E^{i,0}\to E^{i,1}\to\cdots\]
are injective resolutions for all $i\in\Z$ (see \cite[Exercise 5.7.1]{W}). For constructing a CE injective resolution 
for a given complex $X^\bullet$ we start with injective resolutions for $\Ho^i(X^\bullet)$ and $\Bd^i(X^\bullet)$, for all $i\in\Z$. 
Since the sequences $0\to\Bd^i(X^\bullet)\to\Zy^i(X^\bullet)\to\Ho^i(X^\bullet)\to0$ and 
$0\to\Zy^{i}(X^\bullet)\to X^i\to\Bd^{i+1}(X^\bullet)\to0$ are short exact, 
we use horseshoe lemma in order to construct injective 
resolutions for $\Zy^i(X^\bullet)$ and $X^i$. Assembling together these data we obtain the desired CE injective resolution is $X^\bullet\to E^{\bullet,\bullet}$ (see also 
\cite[Lemma 5.7.1]{W}). If $E^{\geq-n,\bullet}$ is the truncated double complex having the columns 
\[E^{\geq-n,j}=(0\to\Bd^{-n}(E^{\bullet,j})\to E^{-n.j}\to E^{-n+1,j}\to\cdots)^t\]
then by the very definition of a CE injective resolution we infer that $X^{\geq-n}\to E^{\geq-n,\bullet}$ is a CE injective resolution for the truncated complex.

\begin{rem}\label{split}The sequences 
$0\to\Bd^{i}(E^{\bullet,j})\to\Zy^{i}(E^{\bullet,j})\to\Ho^{i}(E^{\bullet,j})\to 0$ and $0\to\Zy^{i}(E^{\bullet,j})\to E^{i,j}\to\Bd^{i+1}(E^{\bullet,j})\to 0$ 
have injective components, hence they are split exact for all $i,j\in\Z, j\geq0$.
\end{rem}

Next we define the {\em cototalization} of a double complex $X^{\bullet,\bullet}$ with $X^{i,j}\in\A$ as the simple complex $\Cot(X^{\bullet,\bullet})$ having entries:
\[\Cot(X^{\bullet,\bullet})^n=\prod_{i+j=n}X^{i,j}\] and whose differentials are induced by using the universal property 
of the product by the maps 
\[\prod_{i+j=n}X^{i,j}\to X^{p,q-1}\times X^{p-1,q}\stackrel{(d_h^{p,q-1},d_v^{p-1,q})}\la X^{p,q}\]
for all $p,q\in\Z$ with $p+q=n+1$.

\begin{lem}\label{ce-hi} Consider a 
complete abelian category $\A$, which has an injective cogenerator. 
 If $X^{\bullet}\to E^{\bullet,\bullet}$ is a CE injective resolution of the complex 
 $X^\bullet\in\Cpx\A$ then 
 $\Cot(E^{\bullet,\bullet})\cong\displaystyle{\holim}\Cot(E^{\geq-n,\bullet})$ is 
 homotopically injective.
\end{lem}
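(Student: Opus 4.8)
The plan is to derive both assertions of the lemma from a single structural observation: after fixing one CE injective resolution $E^{\bullet,\bullet}$ of $X^\bullet$ and truncating it, each $\Cot(E^{\geq -n,\bullet})$ is a bounded below complex of injectives and the transition maps between consecutive truncations are degreewise split epimorphisms. Granting this, the homotopical injectivity of $\Cot(E^{\bullet,\bullet})$ comes for free: a bounded below complex of injectives is homotopically injective, the subcategory $\Htpi\A$ is the right orthogonal of the acyclic complexes and hence a triangulated subcategory closed under products, and a homotopy limit is assembled from products and the mapping cone of $1-\mathrm{shift}$; so once $\Cot(E^{\bullet,\bullet})$ is identified with $\holim\Cot(E^{\geq -n,\bullet})$, it must lie in $\Htpi\A$.

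First I would record the two structural facts. In degree $m$ the complex $\Cot(E^{\geq -n,\bullet})$ has only the finitely many factors $E^{i,m-i}$ with $-n-1\le i\le m$ (together with the resolution of $\Bd^{-n}$ in the bottom row), so it is a bounded below complex of finite products of injectives, hence homotopically injective. For the transition maps I would use the split exact sequences $0\to Z^{i,j}\to E^{i,j}\to B^{i+1,j}\to0$ and $0\to B^{i,j}\to Z^{i,j}\to H^{i,j}\to0$ recorded in the construction: the comparison morphism $E^{\bullet,\bullet}\to E^{\geq -n,\bullet}$ induced by the truncation $X^\bullet\to X^{\geq -n}$ is the identity on the rows $i\ge -n$, is the split projection $E^{-n-1,\bullet}\cong Z^{-n-1,\bullet}\oplus B^{-n,\bullet}\twoheadrightarrow B^{-n,\bullet}$ on the bottom row, and is zero below. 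Applying $\Cot$, which acts factorwise on a morphism of double complexes, shows that both $\Cot(E^{\bullet,\bullet})\to\Cot(E^{\geq -n,\bullet})$ and each transition $\Cot(E^{\geq -n,\bullet})\to\Cot(E^{\geq -n+1,\bullet})$ is a degreewise split epimorphism.

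Next I would assemble the homotopy limit. Since the transition maps are degreewise split epimorphisms, $1-\mathrm{shift}\colon\prod_n\Cot(E^{\geq -n,\bullet})\to\prod_n\Cot(E^{\geq -n,\bullet})$ is a degreewise split epimorphism as well: writing each truncation as an iterated direct sum of the kernels of the transitions, one solves the defining equation of $1-\mathrm{shift}$ componentwise by a finite sum, so that no exactness of products (AB$4^*$) is needed and the relevant $\lim^1$ vanishes. Its kernel is the inverse limit $\lim_n\Cot(E^{\geq -n,\bullet})$, and the compatible family of split epimorphisms $\Cot(E^{\bullet,\bullet})\to\Cot(E^{\geq -n,\bullet})$ induces a chain map $\Cot(E^{\bullet,\bullet})\to\lim_n\Cot(E^{\geq -n,\bullet})$ which I would check is a degreewise isomorphism: every factor $E^{i,m-i}$ of $\Cot(E^{\bullet,\bullet})^m$ is hit isomorphically once $n$ is large enough that the comparison map is the identity on the row $i$. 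This yields a degreewise split short exact sequence $0\to\Cot(E^{\bullet,\bullet})\to\prod_n\Cot(E^{\geq -n,\bullet})\xrightarrow{1-\mathrm{shift}}\prod_n\Cot(E^{\geq -n,\bullet})\to0$, hence a triangle in $\Htp\A$; comparing it with the defining triangle of the homotopy limit identifies $\Cot(E^{\bullet,\bullet})\cong\holim\Cot(E^{\geq -n,\bullet})$.

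I expect the main obstacle to be the bookkeeping in the identification $\Cot(E^{\bullet,\bullet})\cong\lim_n\Cot(E^{\geq -n,\bullet})$ together with the vanishing of $\lim^1$. Because the bottom row of each truncation is the resolution $B^{-n,\bullet}$ of $\Bd^{-n}$ rather than $E^{-n-1,\bullet}$, the truncated cototalizations are not literal subcomplexes of $\Cot(E^{\bullet,\bullet})$, and one must use the splittings of the CE resolution both to see that the transitions are split epimorphisms and to run the componentwise solution of $1-\mathrm{shift}$. This is precisely where the Cartan--Eilenberg structure, rather than an arbitrary resolution, is indispensable, and where one avoids assuming that products in $\A$ are exact.
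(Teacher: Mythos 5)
Your proof is correct and takes essentially the same route as the paper's: both rest on the degreewise split epimorphisms $\Cot(E^{\geq-(n+1),\bullet})\to\Cot(E^{\geq-n,\bullet})$ coming from the split exact sequences of the CE resolution, the resulting degreewise split short exact sequence $0\to\lim\Cot(E^{\geq-n,\bullet})\to\prod\Cot(E^{\geq-n,\bullet})\stackrel{1-\mathrm{shift}}\la\prod\Cot(E^{\geq-n,\bullet})\to0$ identifying $\Cot(E^{\bullet,\bullet})$ with the homotopy limit in $\Htp\A$, and the closure of $\Htpi\A$ under homotopy limits together with the homotopical injectivity of the bounded below complexes of injectives $\Cot(E^{\geq-n,\bullet})$. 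The only difference is that you spell out details (the componentwise section of $1-\mathrm{shift}$, the degreewise identification of the inverse limit) that the paper disposes of by citing Murfet's thesis and by brief assertion.
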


\begin{proof} For every $n\in\N$ we observe that there is a map of double complexes
 $E^{\geq-(n+1),\bullet}\to E^{\geq-n,\bullet}$ which is the identity $E^{i,\bullet}\to E^{i,\bullet}$ for $i\geq -n$, the zero map 
 for $i<-(n+1)$ and the epimorphism $E^{-(n+1),\bullet}\to\Bd^{-n}(E^{\bullet,\bullet})$ for $i=-(n+1)$.  Hence Remark \ref{split} tells us 
 that  $E^{\geq-(n+1),\bullet}\to E^{\geq-n,\bullet}$ are split epimorphisms in each degree, 
 for every $n\in\N$. According to \cite[Lemma 2.17]{MT} they induce degree--wise split epimorphisms 
 \[\Cot(E^{\geq-(n+1),\bullet})\to\Cot(E^{\geq-n,\bullet}),\] for all $n\in\N$. Thus there is a degree--wise 
 split short exact sequence in $\Cpx\A$
 \[0\to\displaystyle{\underleftarrow{\lim}}\Cot(E^{\geq-n,\bullet})\to 
 \prod_{n\in\N}\Cot(E^{\geq-n,\bullet})\stackrel{1-shift}\la\prod_{n\in\N}\Cot(E^{\geq-n,\bullet})
 \to0 \] which induces a triangle in $\Htp\A$. On the other hand, we have
 \[\displaystyle{\underleftarrow{\lim}}\Cot(E^{\geq-n,\bullet})\cong\Cot(E^{\bullet,\bullet})\] in $\Cpx\A$, 
 and the induced triangle leads to an isomorphism \[\displaystyle{\holim}\Cot(E^{\geq-n,\bullet})\cong\Cot(E^{\bullet,\bullet})\] in 
 $\Htp\A$ (see also \cite[Lemma 2.6]{IK}). 
 As we noticed, $\Htpi\A$ is a triangulated subcategory closed 
 under products, hence it is also closed under homotopy limits. Finally it remains to show that $\Cot(E^{\geq-n,\bullet})$ is homotopically
 injective for all $n\in\N$. But this property holds for 
  bounded below complexes having injective entries (see for example \cite[Corollary 10.4.7]{W}), in particular it is true for $\Cot(E^{\geq-n,\bullet})$ too.
\end{proof}

For every complex $X^\bullet\in\Cpx\A$ having a CE injective resolution 
$X^\bullet\to E^{\bullet,\bullet}$ we have an obvious map $X^\bullet\to\Cot(E^{\bullet,\bullet})$.
Sometimes it happens that this map is a quasi--isomorphism,  in which case Lemma \ref{ce-hi} above
tells us that it is a homotopically injective resolution. The following lemma shows that this is always the case for 
bounded below complexes, that is complexes $X^\bullet$ for which  $X^n=0$ for $n<<0$.

\begin{lem}\label{ce-plus} Consider a 
complete abelian category $\A$, which has an injective cogenerator. 
 Let $X^\bullet\in\Cpx\A$ be a bounded below complex and 
 let $X^{\bullet}\to E^{\bullet,\bullet}$ be a CE injective resolution. Then 
 $X^\bullet\to\Cot(E^{\bullet,\bullet})$ is a homotopically injective resolution.  
\end{lem}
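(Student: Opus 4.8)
The plan is to reduce the statement to a single quasi--isomorphism and then quote Lemma \ref{ce-hi}. Since $X^\bullet$ is bounded bellow, say $X^i=0$ for $i<i_0$, the corresponding injective resolutions vanish in those degrees, so $E^{i,j}=0$ whenever $i<i_0$, while by construction $E^{i,j}=0$ for $j<0$. Hence for each $n$ the index set $\{(i,j)\mid i+j=n,\ i\geq i_0,\ j\geq0\}$ is finite, and every entry $\Cot(E^{\bullet,\bullet})^n=\prod_{i+j=n}E^{i,j}$ is a finite product, hence a finite coproduct. Thus over a bounded bellow complex the cototalization agrees degree--wise with the ordinary coproduct total complex. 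In particular Lemma \ref{ce-hi} already guarantees that $\Cot(E^{\bullet,\bullet})$ is homotopically injective, so the only point left to settle is that the canonical augmentation $X^\bullet\to\Cot(E^{\bullet,\bullet})$ is a quasi--isomorphism.

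First I would record the shape of the resolution. By construction each row $E^{i,\bullet}$ together with its augmentation is the injective resolution $0\to X^i\to E^{i,0}\to E^{i,1}\to\cdots$, hence an acyclic complex, while the differential in the $i$--direction induces on the augmented $0$--th row precisely the original differential of $X^\bullet$. I then enlarge $E^{\bullet,\bullet}$ to a double complex $\bar E^{\bullet,\bullet}$ by adjoining $X^\bullet$ in resolution degree $-1$, that is $\bar E^{i,-1}=X^i$ and $\bar E^{i,j}=E^{i,j}$ for $j\geq0$, the new horizontal maps being the augmentations $X^i\to E^{i,0}$. This leaves all rows acyclic and keeps the complex supported in the quadrant $\{i\geq i_0,\ j\geq-1\}$, so each antidiagonal stays finite.

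The key step is the acyclicity of $\Cot(\bar E^{\bullet,\bullet})$. Because each antidiagonal is finite, the cototalization of $\bar E^{\bullet,\bullet}$ again coincides with its coproduct total complex, and the classical acyclic assembly lemma applies: a double complex supported in a quadrant with acyclic rows has acyclic total complex. Unwinding the definitions, splitting off the contribution of the column $j=-1$ identifies $\Cot(\bar E^{\bullet,\bullet})$ with the mapping cone of the augmentation $X^\bullet\to\Cot(E^{\bullet,\bullet})$, the adjoined copy of $X^\bullet$ providing the shifted summand and the augmentations providing the connecting component. Its acyclicity therefore says exactly that $X^\bullet\to\Cot(E^{\bullet,\bullet})$ is a quasi--isomorphism, and combined with Lemma \ref{ce-hi} this exhibits it as a homotopically injective resolution.

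The main obstacle I anticipate is to make sure the bounded bellow hypothesis is invoked in the right place and nowhere tacitly dropped, since it is doing genuine work. For an unbounded complex the entries of $\Cot(E^{\bullet,\bullet})$ are true infinite products, the cototalization no longer agrees with the coproduct total complex, and the assembly argument breaks down: a double complex with acyclic rows can have a non--acyclic product total complex. That failure is exactly what is measured by the derived functors of the product, that is by the AB$4^*$--$n$ defect and by non--left--completeness, which is why the unbounded case must instead be treated through the homotopy limit $\Cot(E^{\bullet,\bullet})\cong\holim\Cot(E^{\geq-n,\bullet})$ of Lemma \ref{ce-hi} rather than by a direct assembly. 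Keeping the finiteness of each antidiagonal explicit is thus the crux that makes the bounded bellow case clean.
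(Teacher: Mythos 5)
Your proposal is correct and takes essentially the same route as the paper: the paper also reduces the statement to a quasi--isomorphism (homotopical injectivity coming from Lemma \ref{ce-hi}), forms the augmented first--quadrant bicomplex $A^{\bullet,\bullet}=(0\to X^\bullet\to E^{\bullet,0}\to E^{\bullet,1}\to\cdots)$ (your $\bar E^{\bullet,\bullet}$ up to reindexing), and reads the claim off the triangle $\Cot(A^{\bullet,\bullet})\to X^\bullet\to\Cot(E^{\bullet,\bullet})\to\Cot(A^{\bullet,\bullet})[1]$, i.e.\ off your mapping--cone identification. Its acyclicity step is your acyclic--assembly argument made explicit: since each degree of the cototalization sees only finitely many rows, one may assume there are finitely many nonzero (acyclic) rows and build $\Cot(A^{\bullet,\bullet})$ from them in finitely many triangles.
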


\begin{proof}
 Without losing the generality, we may suppose that $X^j=0$ for all $j<0$, so 
 $E^{i,j}=0$ for $i<0$ or $j<0$. Consider the bicomplex 
 \[A^{\bullet,\bullet}=0\to X^\bullet\to E^{\bullet,0}\to E^{\bullet,1}\to\cdots,\]
 that is the bicomplex whose first column is $X^\bullet$ followed by the columns of 
 $E^{\bullet,\bullet}$ shifted by $-1$.  The sequence of bicomplexes $A^{\bullet,\bullet}\to X^\bullet\to E^{\bullet,\bullet}$ 
 induces a  triangle \[\Cot(A^{\bullet,\bullet})\to X^\bullet\to \Cot(E^{\bullet,\bullet})\to
 \Cot(A^{\bullet,\bullet})[1]\] 
 in $\Htp\A$, since $\Cot(A^{\bullet,\bullet})[1]$ is the mapping cone of $X^\bullet\to \Cot(E^{\bullet,\bullet})$. 
 Now $A^{\bullet,\bullet}$ is a first quadrant bicomplex (that is $A^{i,j}=0$ for $i<0$ or $j<0$) with acyclic rows.
 We claim its cototalization is acyclic, and the triangle above 
 proves our lemma. 
 
 Because $A^{\bullet,\bullet}$ lies in the first quadrant, it follows that $\Cot(A^{\bullet,\bullet})^n=0$ for $n<0$. 
 Fix $n\geq 0$, and let $A^{\leq n+1,\bullet}$ be the truncation of $A$ obtained by deleting the rows in degree $>n+1$, and 
 replacing the $(n+1)$-th row with \[\cdots\to\Zy^{n+1}(A^{i,\bullet})\to\Zy^{n+1}(A^{i+1,\bullet})\to\cdots.\] Since, for $0\leq m\leq n+1$, the computation  of 
 $\Cot(A^{\bullet,\bullet})^m$ involves only the rows $A^{i,\bullet}$ with $0\leq i\leq m$, therefore 
 $\Cot(A^{\bullet,\bullet})^k=\Cot(A^{\leq n,\bullet})^k$, for all $0\leq k\leq n$.  But $A^{\leq n,\bullet}$  is a first quadrant 
 bicomplex with acyclic rows which has only finitely many 
 non--zero rows, therefore we can obtain $\Cot(A^{\leq n,\bullet})$ in finitely 
 many steps by forming triangles whose cones are the rows. This shows that $\Cot(A^{\leq n,\bullet})$ is acyclic, hence $\Cot(A^{\bullet,\bullet})$
 is acyclic in degree $n$.  Because $n$ is arbitrary our claim is proved (see also \cite[Lemma 2.19]{MT}). 
\end{proof}

\begin{prop}\label{ce-inj} Consider a 
complete abelian category $\A$, which has an injective cogenerator, such that  
 $\Der\A$ has products. Suppose also that 
 for any complex in $X^\bullet\in\Cpx\A$ the 
 cototalization of any CE injective resolution $X^\bullet\to E^{\bullet,\bullet}$ provides 
 a homotopically injective resolution $X^\bullet\to\Cot(E^{\bullet,\bullet})$. Then $\Der\A$ has 
 small hom--sets, $\Der\A\opp$ is deconstructible and $\Der\A\opp$ satisfies 
 Brown representabily. \end{prop}

\begin{proof} By hypothesis, $X^\bullet\to\Cot(E^{\bullet,\bullet})$ is a homotopically injective resolution, for every $X^\bullet\in\Htp\A$.  Completing it to a triangle 
\[N^\bullet\to X^\bullet\to\Cot(E^{\bullet,\bullet})\to N^\bullet[1]\] we deduce that 
$N^\bullet$ is acyclic, that is $\Htp\A(N^\bullet,I^\bullet)=0$ for all $I^\bullet\in\Htpi\A$. 
By standard arguments concerning Bousfield localizations, see \cite[dual of Theorems 9.1.16 and Theorem 9.1.13]{N},  
 we obtain an equivalence of categories $\Htpi\A\stackrel{\sim}\la\Der\A$, so $\Der\A$ has small hom--stes. 
 
 Note that every complex $X^\bullet$ is isomorphic in $\Der\A$ to $\Cot(E^{\bullet,\bullet})$. 
 Moreover, Lemma \ref{ce-hi} implies 
 $\Cot(E^{\bullet,\bullet})\cong\displaystyle{\holim}\Cot(E^{\geq-n,\bullet})$.
 But, for every $n\in\N$, the kernel of the degree--wise split epimorphism of complexes 
 (see the proof of Lemma \ref{ce-hi}) $\Cot(E^{\geq-(n+1),\bullet})\to\Cot(E^{\geq-n,\bullet})$
 is the complex \begin{align*} 
 0\to\Bd^{-(n+1)}(E^{\bullet,0})&\to\Zy^{-(n+1)}(E^{\bullet,0})\times\Bd^{-(n+1)}(E^{\bullet,1})\\ &\to 
 \Zy^{-(n+1)}(E^{\bullet,1})\times\Bd^{-(n+1)}(E^{\bullet,2})\to\cdots  
 \end{align*}
  with differentials being represented as matrices whose components 
 are the inclusions $\Bd^{-(n+1)}(E^{\bullet,j})\to\Zy^{-(n+1)}(E^{\bullet,j})$ 
 and $0$ otherwise.  Computing the cohomology of this complex we can see that it is 
 quasi-isomorphic, therefore isomorphic in $\Htpi\A$, to the complex: 
 \[0\to\Ho^{-(n+1)}(E^{\bullet,0})\to\Ho^{-(n+1)}(E^{\bullet,1})\to\Ho^{-(n+1)}(E^{\bullet,2})\to\cdots,\]  with vanishing differentials. 
 But this last complex is the product of its subcomplexes concentrated in each degree 
 and all entries are  
 injective, hence they are  
 direct summands of a product of copies of $Q$, where $Q$ is an injective cogenerator of $\A$. 
 Therefore, every object in $\Htpi\A$ is $\CS$-cofiltered, for $\CS=\{Q[n]\mid n\in\Z\}$, and 
 Lemma \ref{cof} applies.
\end{proof}

\begin{pf} {\it Theorem \ref{brt4der}}. We want to apply Proposition \ref{ce-inj}, hence we have to 
show that, if $\Der\A$ is left--complete, then the cototalization of a CE injective resolution 
$X^\bullet\to E^{\bullet,\bullet}$ provides 
 a homotopically injective resolution for the complex $X^\bullet\in\Cpx\A$.
 This is true for the truncated complexes $X^{\geq-n}$ for all $n\in\N$, by Lemma \ref{ce-plus} above,
 since $X^{\geq-n}\to E^{\geq-n,\bullet}$ is also a CE injective resolution.
 Therefore, $X^{\geq-n}\cong\Cot(E^{\geq-n,\bullet})$ in $\Der\A$. Taking homotopy limits 
 and using the hypothesis and Lemma \ref{ce-hi} we obtain:
 \[X\cong\displaystyle{\holim}X^{\geq-n}\cong\displaystyle{\holim}\Cot(E^{\geq-n,\bullet})\cong
 \Cot(E^{\bullet,\bullet})\] and the proof is complete.
 \qed 
\end{pf}

\begin{rem}
 For complexes of $R$-modules, where $R$ is a ring, it is showed in \cite{KP} that the cototalization of a CE injective resolution
 provides a homotopically injective resolution. The technique used there for doing this stresses the so called Mittag--Leffler 
 condition, which says that limits of inverse towers whose connecting maps are surjective are exact. 
 Amnon Neeman pointed out that Mittag--Leffler condition doesn't work in the more 
 general case of Grothendieck categories, as it may be seen from \cite[Corollary 1.6]{R}. Consequently the argument of 
 Keller in \cite{KP} may not be used without changes in the case of Grothendieck categories. The fact detailed in this Remark was learned from Amnon Neeman.
\end{rem}

\begin{pf} {\it Corollary \ref{alt}}.
As we have already seen the hypotheses of Proposition \ref{ce-inj} are satisfied, hence (1) and (2) hold as it is established in the proof of this Proposition. 
From here the statement (3) is straightforward. \qed
\end{pf}

\section*{Acknowledgement} The author is indebted to Amnon Neeman for several comments and 
suggestions regarding 
this work, especially for pointing him that Mittag--Leffler condition doesn't work in the case of Grothendieck categories. Finally, 
he wishes to thank the anonymous referees for many
helpful suggestions which lead to a considerable improvement of this paper.

\end{document}